\documentclass[a4paper,12pt]{amsart}%
\usepackage{amsfonts}
\usepackage{amsmath}
\usepackage{amssymb}
\usepackage{graphicx}%
\setcounter{MaxMatrixCols}{30}
\providecommand{\U}[1]{\protect\rule{.1in}{.1in}}
\headsep=1.0cm \headheight=0cm \footskip=1.0cm \textwidth=17cm
\textheight=22.5cm \topmargin=0.cm \oddsidemargin=-0.5cm
\evensidemargin=-0.5cm
\parindent=0.0cm
\newtheorem{theorem}{Theorem}
\theoremstyle{plain}

\newtheorem{definition}{Definition}

\numberwithin{equation}{section}
\begin{document}
\title[$G$-convergence for the nonlocal $p$-laplacian operator]{An elementary note about the $G$-convergence for a type of nonlocal
$p$-laplacian operator}
\author{Julio Mu\~{n}oz}
\maketitle

\begin{abstract}
Simple applications of a principle of minimum energy and the property of
monotonicity for the corresponding non-local $p$-laplacian operator, have
allowed a direct proof of the $G$-compactness in a weak sense, as well as in
the strong sense. The definitions of $H$-convergence and $G$-convergence, weak
or strong, have been proved to be equivalent.

\end{abstract}

\section{Introduction}

Let $\Omega$ be a bounded open and smooth domain in $\mathbb{R}^{N}$. We
consider $k,$ a radial and positive function in $L^{1}\left(  \mathbb{R}%
^{N}\right)  $ such that
\begin{equation}
\int_{\mathbb{R}^{N}\setminus B\left(  0,\epsilon\right)  }\frac{k\left(
\left\vert z\right\vert \right)  }{\left\vert z\right\vert ^{p}}dz<\infty,
\label{0}%
\end{equation}
where $1<p<+\infty,$ $B\left(  x,r\right)  $ denotes an open ball centered at
$x\in\mathbb{R}^{N}$ and radius $r>0,$ and $\epsilon$ is any positive number.
Additionally, $k$ is supposed to fulfill the estimation%
\begin{equation}
k\left(  \left\vert z\right\vert \right)  \geq\frac{c_{0}}{\left\vert
z\right\vert ^{N+\left(  s-1\right)  p}}\text{ for any }z\in\mathbb{R}%
^{N}\setminus\left\{  0\right\}  , \label{2}%
\end{equation}
where $c_{0}>0$ and $s\in\left(  0,1\right)  $ are given constants.

The natural frame in which we shall work is the nonlocal energy space
\begin{equation}
X=\left\{  u\in L^{p}\left(  \mathbb{R}^{N}\right)  :B\left(  u,u\right)
<\infty\right\}  \label{3}%
\end{equation}
where $B\left(  \cdot,\cdot\right)  $ is defined by means of the formula%
\begin{equation}
B\left(  u,v\right)  =\int_{\mathbb{R}^{N}}\int_{\mathbb{R}^{N}}k\left(
\left\vert x^{\prime}-x\right\vert \right)  \frac{\left\vert u\left(
x^{\prime}\right)  -u\left(  x\right)  \right\vert ^{p}}{\left\vert x^{\prime
}-x\right\vert ^{p}}dx^{\prime}dx. \label{4}%
\end{equation}
We define also the constrained energy space as%
\[
X_{0}=\left\{  u\in X:u=0\text{ a.e. in }\mathbb{R}^{N}\setminus
\Omega\right\}  .
\]
The space $X$ is a reflexive Banach space equipped with the norm
\[
\left\Vert u\right\Vert _{X}=\left\Vert u\right\Vert _{L^{p}\left(
\mathbb{R}^{N}\right)  }+\left(  B\left(  u,u\right)  \right)  ^{1/p}.
\]
The dual of $X$ will be denoted by $X^{\prime}\ $and can be endowed with the
norm
\[
\left\Vert g\right\Vert _{X^{\prime}}=\sup\left\{  \left\langle
g,w\right\rangle _{X^{\prime}\times X}:w\in X,\text{ }\left\Vert w\right\Vert
_{X}=1\right\}  .
\]
Analogous definitions applies for the space $X_{0}.$

There is another functional space that we will use in the formulation of our
problem, the one that is formed by the diffusion coefficients of our nonlocal
equations. That is
\[
\mathcal{H}\doteq\left\{  h\in L^{\infty}\left(  \mathbb{R}^{N}\times
\mathbb{R}^{N}\right)  \mid h\left(  x^{\prime},x\right)  =h\left(  x^{\prime
},x\right)  \in\lbrack h_{\min},h_{\max}]\text{ a.e. }\left(  x^{\prime
},x\right)  \in\mathbb{R}^{N}\times\mathbb{R}^{N}\right\}  ,
\]
where $h_{\min}\ $and $h_{\max}$ are positive given constants such that
$0<h_{\min}<h_{\max}.$

Now we are ready to set the nonlocal elliptic problem $\left(  P_{h}\right)
$: given $f\in X_{0}^{\prime}$ and $h\in\mathcal{H}$ we look for a function
$u\in X_{0}$ that solves the nonlocal boundary equation%
\begin{equation}
-2\left[  \text{\emph{p.v.}}%
{\displaystyle\int_{\mathbb{R}^{N}}}
h\left(  x^{\prime},x\right)  \frac{k\left(  \left\vert x^{\prime
}-x\right\vert \right)  \left\vert u\left(  x^{\prime}\right)  -u\left(
x\right)  \right\vert ^{p-2}\left(  u\left(  x^{\prime}\right)  -u\left(
x\right)  \right)  }{\left\vert x^{\prime}-x\right\vert ^{p}}dx^{\prime
}\right]  =f,\text{ }x\in\Omega. \label{P}%
\end{equation}
That means that, for any $v\in X_{0},$ $u\in X_{0}$ satisfies the variational
equality
\begin{equation}%
{\displaystyle\int_{\mathbb{R}^{N}}}
{\displaystyle\int_{\mathbb{R}^{N}}}
h\left(  x^{\prime},x\right)  \frac{k\left(  \left\vert x^{\prime
}-x\right\vert \right)  \left\vert u\left(  x^{\prime}\right)  -u\left(
x\right)  \right\vert ^{p-2}\left(  u\left(  x^{\prime}\right)  -u\left(
x\right)  \right)  \left(  v\left(  x^{\prime}\right)  -v\left(  x\right)
\right)  }{\left\vert x^{\prime}-x\right\vert ^{p}}dx^{\prime}dx=\left\langle
f,v\right\rangle _{X_{0}^{\prime}\times X_{0}} \label{problem}%
\end{equation}
The problem (\ref{P}) shall be written as%
\[
\left(  P_{h}\right)  \left\{
\begin{array}
[c]{c}%
\mathcal{L}_{h}u=f\text{ in }\Omega\smallskip\\
u=0\text{ in }\mathbb{R}^{N}\setminus\Omega.
\end{array}
\right.
\]
Throughout the manuscript, the action of $f$ upon a function $v\in X_{0}$ is
denoted by $\left\langle f,v\right\rangle ,$ and if $u$ is a solution of
$\left(  P_{h}\right)  ,$ then it can be expressed via the formula
\[
\left\langle \mathcal{L}_{h}u,v\right\rangle =B_{h}\left(  u,v\right)  ,
\]
where $B_{h}\left(  u,v\right)  $ is the left part of (\ref{problem}).

\begin{definition}
[$G$-convergence]Let $\left(  h_{j}\right)  _{j}$ be a sequence of
coefficients from $\mathcal{H}$ and let $\left(  u_{j}\right)  _{j}$ be the
sequence of the corresponding solution of the problems
\[
\left(  P_{j}\right)  \left\{
\begin{array}
[c]{c}%
\mathcal{L}_{h_{j}}u_{j}=f\text{ in }\Omega\smallskip\\
u_{j}=0\text{ in }\mathbb{R}^{N}\setminus\Omega.
\end{array}
\right.
\]
It is said that the sequence of operators $\left(  \mathcal{L}_{h_{j}}\right)
_{j}$ $G$-converges to the operator $\mathcal{L}_{h}$ when $j\rightarrow
\infty,$ if for any $f\in X_{0}^{\prime},$ $u_{j}\rightharpoonup u$ weakly in
$X_{0}$ if $j\rightarrow\infty,$ where $u$ is the solution to the problem
$\left(  P_{h}\right)  .$
\end{definition}

As we shall see, the problems $\left(  P_{h}\right)  $ or $\left(
P_{j}\right)  $ are well-posed (see Theorem \ref{Th00} below). This fact
allows us to rewrite the above definition as follows: for any $f\in
X_{0}^{\prime},$ $\mathcal{L}_{h_{j}}^{-1}f\rightharpoonup\mathcal{L}_{h}%
^{-1}f$ weakly in $X_{0}$ if $j\rightarrow\infty.$

We need to define the nonlocal flux associated to the operator $\mathcal{L}%
_{h}:$ it is
\[
\Psi_{h}\left(  x^{\prime},x\right)  =h\left(  x^{\prime},x\right)
k^{1/p^{\prime}}\left(  x^{\prime},x\right)  \frac{\left\vert u\left(
x^{\prime}\right)  -u\left(  x\right)  \right\vert ^{p-2}\left(  u\left(
x^{\prime}\right)  -u\left(  x\right)  \right)  }{\left\vert x^{\prime
}-x\right\vert ^{p-1}}\text{ where }u=\mathcal{L}_{h}^{-1}f
\]

\begin{definition}
[$H$-convergence]Let $\left(  h_{j}\right)  _{j}$ be a sequence of
coefficients from $\mathcal{H}$. It is said that the sequence of operators
$\left(  \mathcal{L}_{h_{j}}\right)  _{j}$ $H$-converges to the operator
$\mathcal{L}_{h}$ when $j\rightarrow\infty,$ if for any $f\in X_{0}^{\prime}$
the following convergences hold:

\begin{enumerate}
\item $u_{j}=\mathcal{L}_{h_{j}}^{-1}f\rightharpoonup u=\mathcal{L}_{h}^{-1}f
$ weakly in $X_{0}$ if $j\rightarrow\infty.$

\item $\Psi_{h_{j}}\rightharpoonup\Psi_{h}$ weakly in $L^{p^{\prime}}\left(
\mathbb{R}^{N}\times\mathbb{R}^{N}\right)  $ if $j\rightarrow\infty.$
\end{enumerate}
\end{definition}

The type of convergences we have just set up has been the aim of some recent
papers. In \cite{Bonder}, an $H$-convergence compactness result, via the
oscillating test function method of Tartar, is proved and, a characterization
result of $\Gamma$-convergence for the energy functional associated to the
problems $\left(  P_{j}\right)  ,$ is shown as well. In \cite{Bellido-Egrafov}%
, a characterization of the $H$-limit obtained in \cite{Bonder}, is given and,
the definitions of $G$-convergence and $H$-convergence have been proved to be
equivalent. \cite{Waurick} is a reference where these type convergences are
analyzed under a general abstract setting. In \cite{Du, Mengesha-Du},
interesting advances concerning with $\Gamma$-convergence are obtained, and in
\cite{Andres-Munoz}, a\ strong $G$-convergence result for the linear case
$p=2$ is obtained.

In the present manuscript, the set up of the problem is slightly different,
compared to some of the above references. It includes a kernel so that the
nonlocal problem is formulated in a space that may be smaller than
$W_{0}^{s,p}\left(  \Omega\right)  .$ As opposed to the detailed and nuanced
development elaborated in \cite{Bonder}, this work presents a procedure based
on much more elementary aspects. Here, the proof of the compactness in $L^{p}%
$, and even in stronger norms, have been performed in an simple way. The
departure point to develop the study of these convergences is an abstract
nonlocal energy criterion. This is a principle of minimum energy which,
combined with the monotony of the nonlocal operator, show in a transparent way
the steps we have to follow in order to give the proofs on compactness. The
exhibited procedure has allowed understanding the reason why, in this case,
$G$-convergence, in the weak sense, is equivalent to $G$-convergence in the
strong sense or $H$-convergence (see \cite{Jikov, Allaire} and \cite{Pankov}
for a general local setting).\smallskip

We organize the paper as follows: Section \ref{S2} contains some essential
notes related to the continuity and some fundamental compactness embeddings.
It also includes an essential result ensuring the well-posedness of the
nonlocal variational problems $\left(  P_{j}\right)  $ (Theorem \ref{Th00}).
In Section \ref{S3} Theorem \ref{Th0} establishes the relative convergence of
$\left(  \mathcal{L}_{h_{j}}^{-1}f\right)  \ $towards $\mathcal{L}_{h}^{-1}f $
strongly in the $L^{p}$-norm. This latter result is improved in Section
\ref{S4} by an strong $X_{0}$-compactness result (Theorem \ref{Th1}). After,
the equivalences among the definitions of convergence in $X_{0},$
$G$-convergence and $H$-convergence, of $\left(  \mathcal{L}_{h_{j}}\right)
_{j}$ towards $\mathcal{L}_{h},$ have been proved (Theorems \ref{Th2} and
\ref{Th3}). Section \ref{S5} is devoted to a brief summary of the procedures
and results we have obtained throughout the manuscript.

\section{Some preliminaries\label{S2}}

Some previous remarks concerning the ambient space on which we shall work,
have to be commented:

\begin{enumerate}
\item We firstly notice $X_{0}\subset W_{0}^{s,p}\left(  \Omega\right)  ,$
where $W_{0}^{s,p}\left(  \Omega\right)  =\overline{C_{c}^{\infty}\left(
\Omega\right)  }\ $and the closure is considered with respect to the classical
norm of $W^{s,p}\left(  \mathbb{R}^{N}\right)  .$ The above embedding is true
because in the case of $\Omega$ has a Lipschitz boundary, $W_{0}^{s,p}\left(
\Omega\right)  $ can be identified as the set functions form $W^{s,p}\left(
\mathbb{R}^{N}\right)  $ vanishing outside of $\Omega$ (see
\cite{DiNezza-Palatucci-Valdinoci}), that is
\[
W_{0}^{s,p}\left(  \Omega\right)  =\left\{  g\in W^{s,p}\left(  \mathbb{R}%
^{N}\right)  :g=0\text{ a.e. in }\mathbb{R}^{N}\setminus\Omega\right\}  .
\]
An important fact is the continuous embedding of $X_{0}$ into $L^{p}\left(
\Omega\right)  .$ Indeed, it is known that there exists a constant $c=c\left(
N,s\right)  >0$ such that for any $w\in W_{0}^{s,p}\left(  \Omega\right)  $%
\begin{equation}
c\left\Vert w\right\Vert _{L^{p}\left(  \Omega\right)  }^{p}\leq\int_{\Omega
}\int_{\Omega}\frac{\left\vert w\left(  x^{\prime}\right)  -w\left(  x\right)
\right\vert ^{p}}{\left\vert x^{\prime}-x\right\vert ^{N+sp}}dx^{\prime}dx
\label{Pre0}%
\end{equation}
(see \cite[Th. 6.7]{DiNezza-Palatucci-Valdinoci}). By paying attention to the
hypotheses on the kernel (\ref{2}), and using (\ref{Pre0}), we ensure there is
a positive constant $C$ such that the nonlocal Poincar\'{e} inequality
\begin{equation}
C\left\Vert w\right\Vert _{L^{p}\left(  \Omega\right)  }^{p}\leq B\left(
w,w\right)  \label{Prel3}%
\end{equation}
holds for any $w\in X_{0}.$ In particular, the above inequality implies that
$X_{0}$ is a Banach space when it is equipped with the (equivalent) norm
defined as $\left\Vert w\right\Vert _{X_{0}}=\left(  B\left(  w,w\right)
\right)  ^{\frac{1}{p}}.$

\item \label{point2}The embedding $X_{0}\subset L^{p}\left(  \Omega\right)  $
is compact. To make sure of that we simple take into account (\ref{Prel3}),
the compact embedding $W_{0}^{s,p}\left(  \Omega\right)  \subset L^{p}\left(
\Omega\right)  $ (see \cite[Th. 7.1]{DiNezza-Palatucci-Valdinoci})$\ $and the
fact that $X_{0}$ is closed with respect to strong convergence in $L^{p}$. In
particular, if $\left(  w_{j}\right)  _{j}\ $is a sequence uniformly bounded
in $X_{0},$ that is, if there is a constant $C$ such that $B_{h}\left(
w_{j},w_{j}\right)  \leq C$ for every $j,$ then the positiveness of the
coefficients and (\ref{Prel3}) guarantee $\left(  w_{j}\right)  _{j}$ is
uniformly bounded $L^{p}\left(  \Omega\right)  .$ Moreover, the compact
embedding $X_{0}\subset L^{p}\left(  \Omega\right)  $ ensures the existence of
a subsequence of $\left(  w_{j}\right)  _{j},$ still denoted by $\left(
w_{j}\right)  _{j},$ such that $w_{j}\rightarrow w\in X_{0}$ strongly in
$L^{p}\left(  \Omega\right)  .$

\item Another essential issue of our research is the identification of the
above nonlocal boundary problem with a Dirichlet Principle.

\begin{theorem}
\label{Th00}Let $h$ be any given function form $\mathcal{H}.$ There exists a
unique function $u\in X_{0}$ solution to the problem (\ref{problem}). This
function $u$ is the only solution to the minimization problem%
\begin{equation}
\min_{w\in X_{0}}J_{h}\left(  w\right)  \label{Dirichlet}%
\end{equation}
where%
\[
J_{h}\left(  w\right)  \doteq\frac{1}{p}\int_{\mathbb{R}^{N}}\int
_{\mathbb{R}^{N}}h(x^{\prime},x)k\left(  \left\vert x^{\prime}-x\right\vert
\right)  \frac{\left\vert w\left(  x^{\prime}\right)  -w\left(  x\right)
\right\vert ^{p}}{\left\vert x^{\prime}-x\right\vert ^{p}}dx^{\prime
}dx-\left\langle f\left(  x\right)  ,w\left(  x\right)  \right\rangle .
\]

\end{theorem}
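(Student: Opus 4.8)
The plan is to establish the equivalence between the variational equation (\ref{problem}) and the minimization problem (\ref{Dirichlet}) via convexity, and then obtain existence and uniqueness by the direct method of the calculus of variations. First I would verify that $J_h$ is well-defined and finite on $X_0$: the quadratic-type nonlocal term is controlled by $h_{\max}B(w,w)<\infty$ for $w\in X_0$, and the linear term $\langle f,w\rangle$ is finite because $f\in X_0'$ and $w\in X_0$. Next I would check that $J_h$ is strictly convex and coercive. Convexity follows from the convexity of $t\mapsto|t|^p$ (for $1<p<\infty$) applied pointwise to the map $w\mapsto w(x')-w(x)$, which is linear; strict convexity on $X_0$ comes from the strict convexity of $|t|^p$ together with the nonlocal Poincaré inequality (\ref{Prel3}), which guarantees that $w\mapsto (B(w,w))^{1/p}$ is a genuine norm, so two functions with the same difference quotients a.e. must coincide. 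Coercivity is the key quantitative point: using (\ref{Prel3}) and $h\ge h_{\min}$ we get
\[
J_h(w)\ge \frac{h_{\min}}{p}B(w,w)-\|f\|_{X_0'}\|w\|_{X_0}\ge \frac{h_{\min}}{p}\|w\|_{X_0}^p-\|f\|_{X_0'}\|w\|_{X_0},
\]
which tends to $+\infty$ as $\|w\|_{X_0}\to\infty$ since $p>1$.

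With coercivity in hand, I would take a minimizing sequence $(w_n)_n$; coercivity bounds it in $X_0$, and since $X_0$ is a reflexive Banach space a subsequence converges weakly in $X_0$ to some $u$. The main technical step is the weak lower semicontinuity of $J_h$. The nonlocal term is a convex, continuous (hence weakly lower semicontinuous) functional on $X_0$: it is the $\frac{1}{p}$-th power times a weighted $L^p$-norm of the difference quotient, and one can argue either by Mazur's lemma (convex plus strongly lower semicontinuous implies weakly lower semicontinuous) or by passing to the compact embedding $X_0\subset L^p(\Omega)$ from point (\ref{point2}) to extract an a.e.-convergent subsequence and applying Fatou's lemma to the double integral. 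The linear term $w\mapsto\langle f,w\rangle$ is weakly continuous on $X_0$ by definition of weak convergence. Hence $J_h(u)\le\liminf_n J_h(w_n)=\inf_{X_0}J_h$, so $u$ is a minimizer.

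Finally I would show the minimizer solves (\ref{problem}) and conversely, and that the solution is unique. Since $J_h$ is convex and (Gateaux) differentiable on $X_0$ — the directional derivative in direction $v\in X_0$ is exactly
\[
\frac{d}{dt}\Big|_{t=0}J_h(u+tv)=B_h(u,v)-\langle f,v\rangle,
\]
the computation being justified by dominated convergence using (\ref{0}) and Hölder's inequality to control the integrand — the Euler–Lagrange equation $B_h(u,v)=\langle f,v\rangle$ for all $v\in X_0$ is equivalent to $u$ being a critical point, which for a convex functional is equivalent to being a global minimizer. Uniqueness then follows from strict convexity: if $u_1,u_2$ were two minimizers, strict convexity would force $J_h\big(\tfrac{u_1+u_2}{2}\big)<\tfrac12 J_h(u_1)+\tfrac12 J_h(u_2)=\inf J_h$, a contradiction, unless $u_1=u_2$. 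The step I expect to require the most care is the rigorous justification of the differentiation under the integral sign and the weak lower semicontinuity of the nonlocal term, since the singular kernel $k(|x'-x|)/|x'-x|^p$ is only integrable away from the diagonal and one must combine the assumption (\ref{0}) with Hölder estimates and the compact embedding to push the limits through the double integral cleanly.
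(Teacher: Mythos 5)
Your plan is correct and essentially complete. Note, however, that the paper itself does not prove Theorem \ref{Th00}: it simply refers the reader to \cite{Bonder}. So your contribution is a self-contained argument rather than a reproduction of the paper's proof, and it is the natural one: the direct method applied to the strictly convex, coercive functional $J_h$, using exactly the ingredients the paper already records in Section \ref{S2} (reflexivity of $X$, hence of the weakly closed subspace $X_0$; the nonlocal Poincar\'e inequality (\ref{Prel3}); the compact embedding $X_0\subset L^p(\Omega)$), followed by the identification of minimizers with solutions of (\ref{problem}) through the Euler--Lagrange equation of the convex functional. All the key steps hold up: coercivity from $h\geq h_{\min}$ and (\ref{Prel3}) (only mind that $\|f\|_{X_0'}$ is defined with the $X$-norm, so the bound $|\langle f,w\rangle|\leq C\|f\|_{X_0'}\|w\|_{X_0}$ carries an equivalence constant, which is harmless); weak lower semicontinuity either by Mazur (the nonlocal term is convex and norm-continuous on $X_0$) or by the compact embedding plus Fatou; strict convexity from the strict convexity of $t\mapsto|t|^p$, the a.e. positivity of $hk/|x'-x|^p$, and (\ref{Prel3}) to conclude that equal nonlocal difference quotients force equal functions; and the Gateaux derivative $B_h(u,v)-\langle f,v\rangle$, justified by the pointwise bound $\bigl||a+tc|^p-|a|^p\bigr|\leq p|t|\,|c|\left(|a|+|c|\right)^{p-1}$ together with H\"older's inequality (the hypothesis (\ref{0}) is not really needed for this step, since $u,v\in X_0$ already puts the relevant quantities in the weighted $L^p$ space). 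An alternative route, closer in spirit to what is typically done for such operators and to the monotonicity theme of Section \ref{S4}, would be Browder--Minty theory for the strictly monotone, coercive, hemicontinuous operator $\mathcal{L}_h:X_0\to X_0'$; your convexity argument buys the Dirichlet-principle identification (\ref{Dirichlet}) directly, which is precisely what the rest of the paper uses in Theorem \ref{Th0}.
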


See \cite{Bonder} for the proof.
\end{enumerate}

\section{Nonlocal energy criterion\label{S3}}

Let us examine how is the convergence of the sequence $\left(  u_{j}\right)
_{j}$ of solutions of the problem $\left(  P_{j}\right)  .$ We shall pay
attention to the fact that $u_{j}$ is the minimizer of $J_{h_{j}}\left(
w\right)  =\frac{1}{p}B_{h_{j}}\left(  w,w\right)  -\left\langle
f,w\right\rangle ,$ $w\in X_{0}.$ Now, going into the details, we firstly note
that we can extract a subsequence form $\left(  h_{j}\right)  _{j},$ still
denoted by $\left(  h_{j}\right)  _{j},$ such that $h_{j}\rightharpoonup h$
weakly-$\ast$ in $L^{\infty}\left(  \mathbb{R}^{N}\times\mathbb{R}^{N}\right)
.$ In addition, it is a well-known result that $h\in\mathcal{H}$. Let $u$ be
solution of $\left(  P_{h}\right)  ,$ the one that corresponds to the
coefficient $h$. Necessarily $u$ must be the minimizer of $J_{h}\left(
w\right)  =\frac{1}{p}B_{h}\left(  w,w\right)  -\left\langle f,w\right\rangle
,$ $w\in X_{0}.$

\begin{theorem}
\label{Th0}Under the above circumstances, there exists a subsequence of
$\left(  u_{j}\right)  _{j}$, still denoted by $\left(  u_{j}\right)  _{j},$
for which
\[
u_{j}\rightarrow u\text{ strongly in }L^{p}\left(  \mathbb{R}^{N}\right)  ,
\]
and%
\begin{equation}
\displaystyle\lim_{j}\min_{w\in X_{0}}\left\{  \frac{1}{p}B_{h_{j}}\left(
w,w\right)  -\left\langle f,w\right\rangle \right\}  =\min_{w\in X_{0}%
}\left\{  \frac{1}{p}B_{h}\left(  w,w\right)  -\left\langle f,w\right\rangle
\right\}  \label{G-con-energies}%
\end{equation}
In particular%
\begin{equation}
\lim_{j}B_{h_{j}}\left(  u_{j},u_{j}\right)  =B_{h}\left(  u,u\right)  .
\label{G-con-energies2}%
\end{equation}

\end{theorem}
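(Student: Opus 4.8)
The plan is to exploit the variational characterization from Theorem \ref{Th00} together with the weak-$\ast$ convergence $h_j \rightharpoonup h$. First I would establish a uniform bound on $(u_j)_j$ in $X_0$. Since $u_j$ minimizes $J_{h_j}$, we have $J_{h_j}(u_j) \leq J_{h_j}(0) = 0$, hence $\tfrac{1}{p} B_{h_j}(u_j,u_j) \leq \langle f, u_j\rangle \leq \|f\|_{X_0'} \|u_j\|_{X_0}$. Using the lower bound $h_j \geq h_{\min}$ we get $\tfrac{h_{\min}}{p} B(u_j,u_j) \leq \|f\|_{X_0'} (B(u_j,u_j))^{1/p}$, which yields a uniform bound on $B(u_j,u_j)$, hence on $\|u_j\|_{X_0}$. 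By point \ref{point2} of the preliminaries, a subsequence satisfies $u_j \to \tilde{u}$ strongly in $L^p(\mathbb{R}^N)$ and $u_j \rightharpoonup \tilde{u}$ weakly in $X_0$ for some $\tilde{u} \in X_0$; the task reduces to showing $\tilde{u} = u$ and the energy convergence.

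Next I would prove the $\limsup$-type inequality $\limsup_j \big(\min J_{h_j}\big) \leq \min J_h$ by a recovery-sequence argument: take the fixed competitor $u$ (the minimizer of $J_h$) and estimate $\min J_{h_j} \leq J_{h_j}(u) = \tfrac{1}{p}\int\int h_j(x',x) k(|x'-x|)\tfrac{|u(x')-u(x)|^p}{|x'-x|^p}\,dx'dx - \langle f,u\rangle$. The integrand-without-$h_j$, namely $k(|x'-x|)\tfrac{|u(x')-u(x)|^p}{|x'-x|^p}$, lies in $L^1(\mathbb{R}^N\times\mathbb{R}^N)$ precisely because $u \in X_0$, so weak-$\ast$ convergence $h_j \rightharpoonup h$ gives $J_{h_j}(u) \to J_h(u) = \min J_h$. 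This yields $\limsup_j \min J_{h_j} \leq \min J_h$.

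For the matching $\liminf$ inequality I would use weak lower semicontinuity. The functional $w \mapsto \tfrac1p B_{h_j}(w,w)$ can be written via the substitution $U(x',x) = h_j^{1/p}(x',x) k^{1/p}(|x'-x|)\tfrac{|u_j(x')-u_j(x)|}{|x'-x|}$, but a cleaner route is: for any fixed index, $B_{h_j}(u_j,u_j) \geq B_{h_j}(u_j,u_j) - p\,B_{h_j}(u_j, u_j - w)$-type convexity estimates; concretely, convexity of $t \mapsto |t|^p$ gives $\tfrac1p B_{h_j}(u_j,u_j) \geq \tfrac1p B_{h_j}(w,w) + \langle \mathcal{L}_{h_j} w, u_j - w\rangle$ for any $w$, but the dependence on $j$ in the operator is awkward. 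Instead I would argue directly: since $u_j \to \tilde u$ in $L^p$ and (along a further subsequence) $\tfrac{u_j(x')-u_j(x)}{|x'-x|}k^{1/p} \to \tfrac{\tilde u(x')-\tilde u(x)}{|x'-x|}k^{1/p}$ a.e., Fatou's lemma applied to the nonnegative integrand $h_j k \tfrac{|u_j(x')-u_j(x)|^p}{|x'-x|^p}$ combined with $h_j \to h$ (one needs a.e. convergence of $h_j$ — not available from weak-$\ast$ alone). The honest approach: use that $w \mapsto B_{h}(w,w)$ is weakly l.s.c. on $X_0$ for fixed $h$, and handle the varying $h_j$ by Mazur/monotonicity — here is where the monotonicity of the operator advertised in the introduction enters. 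Write $\tfrac1p B_{h_j}(u_j,u_j) - \langle f,u_j\rangle = J_{h_j}(u_j) \geq$ (by convexity) $J_{h_j}(v) + \langle \mathcal{L}_{h_j} v - f, u_j - v\rangle$ for any $v \in X_0$; since $h_j \rightharpoonup h$ weak-$\ast$ and $\mathcal{L}_{h_j} v$ involves $h_j$ linearly against the fixed $L^{p'}$ flux of $v$, we get $\mathcal{L}_{h_j} v \to \mathcal{L}_h v$ in $X_0'$, and $u_j - v \rightharpoonup \tilde u - v$ weakly in $X_0$, so $\liminf_j J_{h_j}(u_j) \geq J_h(v) + \langle \mathcal{L}_h v - f, \tilde u - v\rangle$. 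Taking $v = \tilde u$ shows $\liminf_j \min J_{h_j} = \liminf_j J_{h_j}(u_j) \geq J_h(\tilde u)$.

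Combining the two inequalities gives $J_h(\tilde u) \leq \min J_h$, so $\tilde u$ is a minimizer of $J_h$; by the uniqueness in Theorem \ref{Th00}, $\tilde u = u$, and moreover $\min J_{h_j} \to \min J_h$, which is \eqref{G-con-energies}. Since the limit is independent of the subsequence, the whole sequence converges. Finally, \eqref{G-con-energies2} follows by combining the energy convergence with $\langle f, u_j\rangle \to \langle f, u\rangle$ (a consequence of $u_j \rightharpoonup u$ in $X_0$ and $f \in X_0'$): from $\tfrac1p B_{h_j}(u_j,u_j) - \langle f,u_j\rangle \to \tfrac1p B_h(u,u) - \langle f,u\rangle$ we extract $\tfrac1p B_{h_j}(u_j,u_j) \to \tfrac1p B_h(u,u)$, i.e. $B_{h_j}(u_j,u_j) \to B_h(u,u)$. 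The main obstacle is the $\liminf$ step: naively one wants lower semicontinuity but the coefficient is only weak-$\ast$ convergent, so I expect the decisive idea to be replacing direct l.s.c. by the convexity/monotonicity inequality tested against a fixed competitor $v$, which decouples the $j$-dependence of $h_j$ (linear, handled by weak-$\ast$) from the $j$-dependence of $u_j$ (handled by weak convergence in $X_0$), and only at the end setting $v = \tilde u$.
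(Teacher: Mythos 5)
Your overall architecture coincides with the paper's up to the decisive point: uniform bound from $J_{h_j}(u_j)\le J_{h_j}(0)=0$, extraction of a subsequence with $u_j\to\tilde u$ strongly in $L^p$ and weakly in $X_0$, the $\limsup$ inequality by testing $J_{h_j}$ with the fixed competitor $u$ and using weak-$\ast$ convergence of $h_j$ against an $L^1$ integrand, identification of $\tilde u=u$ by the uniqueness in Theorem \ref{Th00}, and the deduction of \eqref{G-con-energies2}. The gap is exactly where you place the ``decisive idea'', the $\liminf$ step. From $h_j\rightharpoonup h$ weak-$\ast$ and the fact that $h_j$ enters linearly against the fixed flux of $v$ you may conclude only that $\langle\mathcal{L}_{h_j}v,w\rangle\to\langle\mathcal{L}_{h}v,w\rangle$ for each \emph{fixed} $w\in X_0$, i.e.\ weak-$\ast$ convergence of the functionals $\mathcal{L}_{h_j}v$ in $X_0^{\prime}$, not convergence in the dual norm. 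Since $u_j-v$ converges only \emph{weakly} in $X_0$, the term $\langle\mathcal{L}_{h_j}v-f,u_j-v\rangle$ is a weak-against-weak pairing and does not pass to the limit in general; in the local analogue the corresponding assertion, that $-\operatorname{div}(a_j\nabla v)\to-\operatorname{div}(a\nabla v)$ strongly in $H^{-1}$ when $a_j\rightharpoonup a$ weak-$\ast$, is false for oscillating coefficients, which is precisely why $H$-convergence is a nontrivial notion. In the present nonlocal setting the convergence you need is in fact true, but its proof requires the integration-by-parts formula \eqref{partes}, the uniform $L^{p^{\prime}}$ bound on the nonlocal divergence $d\psi_j$ (which rests on hypothesis \eqref{0} and the principal-value interpretation), and the strong $L^p$ convergence of $u_j$ coming from the compact embedding --- that is, exactly the machinery the paper develops only later, in the proof of Theorem \ref{Th1}. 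As written, your key claim is asserted rather than proved, and the convexity inequality by itself does not decouple the two weak convergences.

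Note also that the route you discarded is the one the paper takes, and it does not require a.e.\ convergence of $h_j$: along a further subsequence the nonnegative integrands $k\left\vert u_j^{\prime}-u_j\right\vert^{p}/\left\vert x^{\prime}-x\right\vert^{p}$ converge a.e.\ (from the strong $L^p$ convergence of $u_j$), and a generalized Fatou lemma for nonnegative a.e.\ convergent integrands weighted by nonnegative, uniformly bounded, weak-$\ast$ convergent coefficients (see \cite{Royden} and Proposition 1 of \cite{Andres-Munoz}) yields $\liminf_j B_{h_j}(u_j,u_j)\ge B_h(u^{\ast},u^{\ast})$ directly; an Egorov-plus-truncation argument reduces it to pairing $h_j$ against a strongly convergent $L^1$ function. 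With that lemma replacing your unproved strong $X_0^{\prime}$ convergence, the remainder of your argument (combining the two inequalities, invoking uniqueness, and extracting \eqref{G-con-energies2} via $\langle f,u_j\rangle\to\langle f,u\rangle$) goes through as written.
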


\begin{proof}
The first consequence derived from the fact $u_{j}$ is a solution to $\left(
P_{j}\right)  $ is%
\[
C\left\Vert u_{j}\right\Vert _{L^{p}\left(  \Omega\right)  }^{p}\leq h_{\max}%
{\displaystyle\int_{\mathbb{R}^{N}}}
{\displaystyle\int_{\mathbb{R}^{N}}}
\frac{k\left(  \left\vert x^{\prime}-x\right\vert \right)  \left\vert
u_{j}\left(  x^{\prime}\right)  -u_{j}\left(  x\right)  \right\vert ^{p}%
}{\left\vert x^{\prime}-x\right\vert ^{p}}dx^{\prime}dx\leq\left\Vert
f\right\Vert _{X_{0}^{\prime}}\left\Vert u_{j}\right\Vert _{X_{0}}%
\]
for any $j.$ This implies the sequences of norms $\left\Vert u_{j}\right\Vert
_{X_{0}}$ and $\left\Vert u_{j}\right\Vert _{L^{p}\left(  \mathbb{R}%
^{N}\right)  }^{p}$ are uniformly bounded. This preliminary serves to ensure
that $\left(  u_{j}\right)  _{j},$ or at least for a subsequence of it, is
weakly convergent in $L^{p}\left(  \mathbb{R}^{N}\right)  $ to a function
$u^{\ast}$ $\in L^{p}\left(  \mathbb{R}^{N}\right)  .$ But, by exploiting the
fact that $u_{j}$ is also a sequence uniformly bounded in $X_{0},$ we improve
this convergence (see comment \ref{point2} in Section \ref{S2}):
$u_{j}\rightarrow u^{\ast}\in X_{0}$ strongly in $L^{p}\left(  \Omega\right)
.$ Let $m_{j}$ be the minimum value of functional $J_{h_{j}},$ which is
attained by $u_{j},\ $and let $m$ be the minimum value of $J_{h},$ which is
attained by $u.$ Since $h_{j}\rightharpoonup h$ weakly-$\ast$ in $L^{\infty
}\left(  \mathbb{R}^{N}\times\mathbb{R}^{N}\right)  $ and $\frac{k\left(
\left\vert x^{\prime}-x\right\vert \right)  \left\vert u\left(  x^{\prime
}\right)  -u\left(  x\right)  \right\vert ^{p}}{\left\vert x^{\prime
}-x\right\vert ^{p}}\in L^{1}\left(  \mathbb{R}^{N}\times\mathbb{R}%
^{N}\right)  $ then
\[
\lim_{j}m_{j}\leq\lim_{j}\frac{1}{p}B_{h_{j}}\left(  u,u\right)  -\left\langle
f,u\right\rangle =\frac{1}{p}B_{h}\left(  u,u\right)  -\left\langle
f,u\right\rangle =m.
\]
And the reverse is also fulfilled:%
\[
m\leq\frac{1}{p}B_{h}\left(  u^{\ast},u^{\ast}\right)  -\left\langle
f,u^{\ast}\right\rangle \leq\lim_{j}\frac{1}{p}B_{h_{j}}\left(  u_{j}%
,u_{j}\right)  -\left\langle f,u_{j}\right\rangle =\lim_{j}m_{j}%
\]
Note that to justify the second inequality we have used the following
argument: since the sequence of functions $\frac{k\left(  \left\vert
x^{\prime}-x\right\vert \right)  \left\vert u_{j}\left(  x^{\prime}\right)
-u_{j}\left(  x\right)  \right\vert ^{p}}{\left\vert x^{\prime}-x\right\vert
^{p}}$ converges a.e. in $\mathbb{R}^{N}\times\mathbb{R}^{N}$ to the function
$\frac{k\left(  \left\vert x^{\prime}-x\right\vert \right)  \left\vert
u\left(  x^{\prime}\right)  -u\left(  x\right)  \right\vert ^{p}}{\left\vert
x^{\prime}-x\right\vert ^{p}},$ and $h_{j}\rightharpoonup h$ weakly-$\ast$ in
$L^{\infty}\left(  \mathbb{R}^{N}\times\mathbb{R}^{N}\right)  ,$ then thanks
to a generalized version of Fatou's Lemma (see \cite{Royden}) we derive
$\lim_{j}B_{h_{j}}\left(  u_{j},u_{j}\right)  \geq B_{h}\left(  u,u\right)  $
(see \cite[Proposition 1, p.309]{Andres-Munoz}).\newline By linking the above
inequalities, we prove $\lim_{j}m_{j}=m$, which is factually
(\ref{G-con-energies}). This convergence implies that both $u$ and $u^{\ast}$
are solutions to $\left(  P_{h}\right)  ,$ so that, thanks to the uniqueness
of solution for $\left(  P_{h}\right)  $ (Theorem \ref{Th00})$,$ $u=u^{\ast}.$
In addition the limit (\ref{G-con-energies2}) holds.
\end{proof}

\section{Strong convergence in $X_{0}$\label{S4}}

There are two ingredients for improving the strong convergence in $L^{p}$. One
of them is the monotonicity of the operator $B\left(  \cdot,\cdot\right)  ,$
which is a consequence of the elementary inequality (\ref{ele_ine}) (see
below). More specifically, the fact that (\ref{ele_ine}) makes the sequence of
operators $\left(  \mathcal{L}_{h_{j}}^{-1}\right)  _{j}$ to be uniformly
strictly monotone. The second aspect is about the hypothesis concerning the
definition of the operator $B$: any of the singular integrals involved in the
above definitions, have been understood as the principal value. Compare to the
situation of pure convergent integral-Lebesgue, this assumption entails a
different treatment of the (\textit{a priori} nonpositive) singular integrals
involved in our analysis. This formulation of the problem allows us to prove a
certain formula of integration by parts that eventually, together the strong
convergence obtained in Theorem \ref{Th0}, show us, with clarity, the way to
achieve strong convergence in $X_{0}.$ However, this assumption concerning the
way we have to interpret the singular integrals is not necessary when we deal
with the case $p=2.$ In such a case the bilinearity of $B$ and the convergence
of energies obtained in (\ref{G-con-energies2}) are enough to prove the
aforementioned strong convergence in $X_{0}$ (see \cite{Andres-Munoz}).

\subsection{Strong convergence in $X_{0}$}

We are going to prove a $X_{0}$ strong compactness result. That simply means
that from the sequence $\left(  \mathcal{L}_{h_{j}}\right)  _{j}$ we can
extract a subsequence, $\left(  \mathcal{L}_{h_{j_{k}}}\right)  _{k},$ such
that $\mathcal{L}_{h_{j_{k}}}^{-1}f\rightarrow\mathcal{L}_{h}^{-1}f$ strongly
in the norm of $X_{0}$ if $k\rightarrow\infty.$

For convenience, in the remainder of the manuscript, we will adopt the
notation $v^{\prime}=v\left(  x^{\prime}\right)  $ for any function
$v=v\left(  x\right)  .$

\begin{theorem}
\label{Th1}From the given sequence of coefficients $\left(  h_{j}\right)
_{j}\subset\mathcal{H},$ we can extract a subsequence, which will not be
relabelled, such that $h_{j}\rightharpoonup h$ weak-$\ast$ in $L^{\infty
}\left(  \mathbb{R}^{N}\times\mathbb{R}^{N}\right)  $ and $\left(
u_{j}\right)  _{j}$ strongly converges to $u$ strongly in $X_{0}$ if
$j\rightarrow\infty.$
\end{theorem}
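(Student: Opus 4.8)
The plan is to upgrade the strong $L^p$-convergence $u_j\to u$ from Theorem~\ref{Th0} to strong convergence in $X_0$ by exploiting the uniform strict monotonicity of the nonlocal operators together with an integration-by-parts identity for the principal-value singular integrals. Recall the elementary inequality (\ref{ele_ine}) (the vector $p$-Laplacian monotonicity bound): for $1<p<\infty$ there is $c_p>0$ with $\langle |a|^{p-2}a-|b|^{p-2}b,\,a-b\rangle\ge c_p|a-b|^p$ when $p\ge 2$ (and a corresponding estimate for $1<p<2$). Applying this pointwise to $a=u_j'-u_j$, $b=u'-u$, multiplying by $h_j(x',x)k(|x'-x|)/|x'-x|^p\ge h_{\min}k(|x'-x|)/|x'-x|^p$, and integrating over $\mathbb{R}^N\times\mathbb{R}^N$, I obtain (in the $p\ge 2$ case)
\[
c_p h_{\min}\,B(u_j-u,\,u_j-u)\ \le\ B_{h_j}^{*}(u_j,u_j-u)-B_{h_j}^{*}(u,u_j-u),
\]
where $B_{h_j}^{*}$ denotes the bilinear-type form appearing in (\ref{problem}) with the $p$-Laplacian nonlinearity. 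So it suffices to show that the right-hand side tends to $0$.

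The first term $B_{h_j}^{*}(u_j,u_j-u)=\langle \mathcal{L}_{h_j}u_j,\,u_j-u\rangle=\langle f,\,u_j-u\rangle$ because $u_j$ solves $(P_j)$; since $u_j\rightharpoonup u$ weakly in $X_0$ (which follows from the uniform $X_0$-bound in the proof of Theorem~\ref{Th0} together with $u_j\to u$ in $L^p$), this converges to $0$. The delicate term is $B_{h_j}^{*}(u,u_j-u)$, i.e.
\[
\int_{\mathbb{R}^N}\!\int_{\mathbb{R}^N}
h_j(x',x)\,\frac{k(|x'-x|)\,|u'-u|^{p-2}(u'-u)\big((u_j'-u_j)-(u'-u)\big)}{|x'-x|^p}\,dx'\,dx .
\]
Here the integrand is \emph{not} known to be in $L^1$ separately in its two pieces — that is exactly why the principal-value interpretation matters. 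The idea is to use the integration-by-parts formula (the one advertised in the introduction to Section~\ref{S4}): by the symmetry $k(|x'-x|)=k(|x-x'|)$ and a change of variables $x\leftrightarrow x'$, the double integral can be rewritten, up to the antisymmetric-in-$(x',x)$ structure of the nonlinearity, as a p.v.\ integral of the form $\langle \mathcal{L}_h^{\,j}u,\,u_j-u\rangle$-type expression against the test function $u_j-u\in X_0$, where $\mathcal{L}_h^{\,j}$ is the operator with frozen coefficient $h_j$. Since $u_j-u\rightharpoonup 0$ weakly in $X_0$ and the "flux" $h_j\,k^{1/p'}|u'-u|^{p-2}(u'-u)/|x'-x|^{p-1}$ is bounded in $L^{p'}(\mathbb{R}^N\times\mathbb{R}^N)$ (because $u\in X_0$ and $h_j$ is bounded by $h_{\max}$), the pairing converges; and its limit is $\langle \mathcal{L}_h u,\,0\rangle$-type $=0$ once we check that the frozen-coefficient operators $\mathcal{L}_h^{\,j}$ applied to the fixed $u$ converge in $X_0'$ to the limit operator applied to $u$, which is precisely the weak-$\ast$ convergence $h_j\rightharpoonup h$ paired against the fixed $L^{p'}$-flux of $u$. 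For $1<p<2$ one instead uses the standard Hölder trick: $B(u_j-u,u_j-u)\le C\big(B_{h_j}^{*}(u_j,u_j-u)-B_{h_j}^{*}(u,u_j-u)\big)^{p/2}\big(B(u_j,u_j)+B(u,u)\big)^{(2-p)/2}$, and the uniform bounds close the argument.

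The main obstacle, and the point requiring the most care, is making the integration-by-parts step rigorous: one must justify splitting and recombining the p.v.\ singular integrals so that the expression $B_{h_j}^{*}(u,u_j-u)$ is genuinely realized as a dual pairing $\langle g_j,\,u_j-u\rangle_{X_0'\times X_0}$ with $g_j$ bounded in $X_0'$ and $g_j\to g$ in $X_0'$ (or at least $\langle g_j,\,w\rangle\to 0$ for $w=u_j-u\rightharpoonup 0$, using an Aubin–Lions-type or equicontinuity argument to pass from weak to the needed convergence). This is where hypotheses (\ref{0}) and (\ref{2}) on the kernel — guaranteeing both the finiteness of $\int_{\mathbb{R}^N\setminus B(0,\epsilon)}k(|z|)/|z|^p\,dz$ and the nonlocal Poincaré inequality (\ref{Prel3}) — are essential to control the tails and the near-diagonal singular part respectively. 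Once the pairing $B_{h_j}^{*}(u,u_j-u)\to 0$ is established, strict monotonicity gives $B(u_j-u,u_j-u)\to 0$, hence $\|u_j-u\|_{X_0}\to 0$, completing the proof.
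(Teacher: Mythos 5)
Your overall strategy --- uniform monotonicity via (\ref{ele_ine}) to reduce matters to showing $B_{h_j}(u_j,u_j-u)-B_{h_j}(u,u_j-u)\to 0$, the first term handled through the equations, the second through an integration-by-parts identity for the principal-value integrals, and the H\"older trick for $1<p<2$ --- is exactly the paper's strategy, and your treatment of the first term is fine (the paper uses the energy convergence (\ref{G-con-energies2}) instead of weak $X_0$-convergence, but both work). The gap is in the crucial term $B_{h_j}(u,u_j-u)$. You propose to realize it as a pairing $\langle g_j,\,u_j-u\rangle_{X_0'\times X_0}$ and to pass to the limit using $u_j-u\rightharpoonup 0$ weakly in $X_0$ together with convergence of $g_j$ (the frozen-coefficient operator applied to the fixed $u$) deduced from $h_j\rightharpoonup h$ weak-$\ast$. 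But weak-$\ast$ convergence of the coefficients only yields $\langle g_j,w\rangle\to\langle g,w\rangle$ for each \emph{fixed} $w$, i.e.\ weak convergence of $g_j$ in $X_0'$, not norm convergence; and a weakly convergent sequence of functionals paired with a weakly convergent sequence of test functions need not converge. You acknowledge this yourself (the appeal to an unspecified ``Aubin--Lions-type or equicontinuity argument''), but you do not supply the missing step, so as written the proof does not close.

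The paper closes it differently, and this is the one idea your proposal lacks: after a change of variables, $B_{h_j}(u,u_j-u)=\int_{\mathbb{R}^N} d\psi_j(x)\,(u_j-u)(x)\,dx$, where $d\psi_j$ is the \emph{nonlocal divergence} of the flux $\psi_j=k^{1/p^{\prime}}h_j\left\vert u^{\prime}-u\right\vert^{p-2}(u^{\prime}-u)/\left\vert x^{\prime}-x\right\vert^{p-1}$, and (using the kernel hypothesis (\ref{0}) and H\"older) the sequence $(d\psi_j)_j$ is uniformly bounded in $L^{p^{\prime}}(\mathbb{R}^N)$ \emph{as functions of $x$ alone}. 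Then H\"older gives $\left\vert B_{h_j}(u,u_j-u)\right\vert\le \left\Vert d\psi_j\right\Vert_{L^{p^{\prime}}}\left\Vert u_j-u\right\Vert_{L^p}$, which vanishes precisely because of the \emph{strong} $L^p$ convergence $u_j\to u$ of Theorem \ref{Th0}; no weak-weak pairing is needed and no convergence of $h_j$ enters this step at all. In short, the mechanism you should invoke is the strong $L^p$ convergence (the whole point of Theorem \ref{Th0}) against an $L^{p^{\prime}}(\mathbb{R}^N)$ bound on the nonlocal divergence, rather than an attempted upgrade of weak-$\ast$ coefficient convergence to $X_0'$-norm convergence of the associated fluxes.
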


\begin{proof}
In a first stage we assume $p\geq2$. We define the sequences $a_{j}%
=u_{j}^{\prime}-u_{j}$ and $b=u^{\prime}-u$, and we take into account the next
elementary inequality: if $1<p<\infty,$ then there exist two positive
constants $C=C\left(  p\right)  $ and $c=c\left(  p\right)  ,$ such that for
every $a,$ $b\in\mathbb{R}^{N}$%
\begin{equation}
c\left\{  \left\vert a\right\vert +\left\vert b\right\vert \right\}
^{p-2}\left\vert a-b\right\vert ^{2}\leq\left(  \left\vert a\right\vert
^{p-2}a-\left\vert b\right\vert ^{p-2}b\right)  \cdot\left(  a-b\right)  \leq
C\left\{  \left\vert a\right\vert +\left\vert b\right\vert \right\}
^{p-2}\left\vert a-b\right\vert ^{2}.\label{ele_ine}%
\end{equation}
(cf. \cite{Chipot}). In particular, there is a constant $C>0$ such that
\begin{equation}
\left\vert a-b\right\vert ^{p}\leq C\left(  \left\vert a\right\vert
^{p-2}a-\left\vert b\right\vert ^{p-2}b\right)  \left(  a-b\right)
.\label{elem-ineq}%
\end{equation}
By applying (\ref{elem-ineq}) with $a=a_{j}$ and $b,$ we easily get%
\begin{equation}
\left\Vert u_{j}-u\right\Vert _{X_{0}}^{p}=B_{h_{j}}\left(  u_{j}%
-u,u_{j}-u\right)  \leq C\left(  B_{h_{j}}\left(  u_{j},u_{j}-u\right)
-B_{h_{j}}\left(  u,u_{j}-u\right)  \right)  .\label{strongX0}%
\end{equation}
By Theorem \ref{Th0} $u_{j}=\mathcal{L}_{h_{j}}^{-1}\left(  f\right)  $ and
$u=\mathcal{L}_{h}^{-1}\left(  f\right)  ,$ whence
\begin{align*}
B_{h_{j}}\left(  u_{j},u_{j}-u\right)   &  =\left\langle f,u_{j}\right\rangle
-\left\langle f,u\right\rangle \\
&  =B_{h_{j}}\left(  u_{j},u_{j}\right)  -B_{h}\left(  u,u\right)  ,
\end{align*}
which vanishes if $j\rightarrow\infty$ because of (\ref{G-con-energies2}).
Thus, the first term of the right part of (\ref{strongX0}) goes to zero. The
second term of (\ref{strongX0}) can be rewritten as%
\[
B_{h_{j}}\left(  u,u_{j}-u\right)  =%
{\displaystyle\int_{\mathbb{R}^{N}}}
{\displaystyle\int_{\mathbb{R}^{N}}}
\psi_{j}\left(  x^{\prime},x\right)  D\left(  u_{j}-u\right)  \left(
x^{\prime},x\right)  dx^{\prime}dx,
\]
where
\[
\psi_{j}\left(  x^{\prime},x\right)  =k^{1/p^{\prime}}h_{j}\frac{\left\vert
u^{\prime}-u\right\vert ^{p-2}\left(  u^{\prime}-u\right)  }{\left\vert
x^{\prime}-x\right\vert ^{p-1}},
\]
and $D\left(  u_{j}-u\right)  $ is the \textit{nonlocal gradient} of
$u_{j}-u,$
\[
D\left(  u_{j}-u\right)  \left(  x^{\prime},x\right)  =k^{1/p}\frac{\left(
u_{j}^{\prime}-u_{j}-\left(  u^{\prime}-u\right)  \right)  }{\left\vert
x^{\prime}-x\right\vert }%
\]
We also define the \textit{nonlocal divergence} of $\psi_{j}$ as
\[
d\psi_{j}\left(  x\right)  =%
{\displaystyle\int_{\mathbb{R}^{N}}}
k^{1/p}\frac{\psi_{j}\left(  x,x^{\prime}\right)  -\psi_{j}\left(  x^{\prime
},x\right)  }{\left\vert x^{\prime}-x\right\vert }dx^{\prime}.
\]
We have the following properties for these functions (see \cite{Bonder} for a
detailed account): by the anti-symmetry of $\psi_{j},$ it is clear that
\[
d\psi_{j}\left(  x\right)  =%
{\displaystyle\int_{\mathbb{R}^{N}}}
k^{1/p}\frac{\psi_{j}\left(  x,x^{\prime}\right)  -\psi_{j}\left(  x^{\prime
},x\right)  }{\left\vert x^{\prime}-x\right\vert }dx^{\prime}=-2%
{\displaystyle\int_{\mathbb{R}^{N}}}
k^{1/p}\frac{\psi_{j}\left(  x^{\prime},x\right)  }{\left\vert x^{\prime
}-x\right\vert }dx^{\prime}.
\]
Besides, the integrability property of the kernel (\ref{0}) and H\"{o}lder
inequality make automatic the proof of that the sequence $\left(  d\psi
_{j}\right)  _{j}$ is uniformly bounded in $L^{p^{\prime}}.$ At this point,
one has solely to perform a change of variables to deduce how $d\psi
_{j}\left(  x\right)  \in X_{0}^{\prime}$ acts on $u_{j}-u\in X_{0}:$ it can
be expressed through the next \textit{formula of integration by parts}:%
\begin{equation}%
{\displaystyle\int_{\mathbb{R}^{N}}}
d\psi_{j}\left(  x\right)  \left(  u_{j}-u\right)  \left(  x\right)  dx=%
{\displaystyle\int_{\mathbb{R}^{N}}}
{\displaystyle\int_{\mathbb{R}^{N}}}
\psi_{j}\left(  x^{\prime},x\right)  D\left(  u_{j}-u\right)  \left(
x^{\prime},x\right)  dx^{\prime}dx.\label{partes}%
\end{equation}

By gathering all the above comments we derive
\begin{align*}
\left\vert B_{h_{j}}\left(  u,u_{j}-u\right)  \right\vert  &  \leq\left\vert
{\displaystyle\int_{\mathbb{R}^{N}}}
d\psi_{j}\left(  x\right)  \left(  u_{j}-u\right)  \left(  x\right)
dx\right\vert \leq\left(
{\displaystyle\int_{\mathbb{R}^{N}}}
\left\vert d\psi_{j}\left(  x\right)  \right\vert ^{p^{\prime}}dx\right)
^{1/p^{\prime}}\left(
{\displaystyle\int_{\mathbb{R}^{N}}}
\left\vert u_{j}-u\right\vert ^{p}\left(  x\right)  dx\right)  ^{1/p}\\
&  \leq C\left(
{\displaystyle\int_{\mathbb{R}^{N}}}
\left\vert u_{j}-u\right\vert ^{p}\left(  x\right)  dx\right)  ^{1/p}%
\rightarrow0\text{ if }j\rightarrow\infty.
\end{align*}
Thus, we have proved that the right part of (\ref{strongX0}) goes to zero and
the desired strong convergence for a subsequence of $u_{j}$ toward $u$ in
$X_{0}$ has been checked:%
\begin{equation}
\lim_{j}%
{\displaystyle\int_{\mathbb{R}^{N}}}
{\displaystyle\int_{\mathbb{R}^{N}}}
kh_{j}\frac{\left\vert u_{j}^{\prime}-u_{j}-\left(  u^{\prime}-u\right)
\right\vert ^{p}}{\left\vert x^{\prime}-x\right\vert ^{p}}dx^{\prime}dx=0.
\label{1}%
\end{equation}
If $1<p<2$ the procedure is similar. We pay attention to this elementary
manipulation: fixed $m=\frac{\left(  2-p\right)  p}{2},$ we apply H\"{o}lder's
inequality. Then, by taking into account that $k\in L^{1}\left(
\mathbb{R}^{N}\right)  $ and using the left part of the inequality
(\ref{ele_ine}), we obtain that%
\begin{align*}
\left\Vert u_{j}-u\right\Vert _{X_{0}}^{p}  &  =%
{\displaystyle\int_{\mathbb{R}^{N}}}
{\displaystyle\int_{\mathbb{R}^{N}}}
\left\vert a_{j}-b\right\vert ^{p}\frac{\left\vert x^{\prime}-x\right\vert
^{m}}{\left\vert x^{\prime}-x\right\vert ^{m}}\frac{kh_{j}}{\left\vert
x^{\prime}-x\right\vert ^{p}}dx^{\prime}dx\\
&  \leq C\left(
{\displaystyle\int_{\mathbb{R}^{N}}}
{\displaystyle\int_{\mathbb{R}^{N}}}
\frac{\left\vert a_{j}-b\right\vert ^{2}}{\left\vert x^{\prime}-x\right\vert
^{2-p}}\frac{kH_{j}}{\left\vert x^{\prime}-x\right\vert ^{p}}dx^{\prime
}dx\right)  ^{p/2}\\
&  \leq C\left(  B_{h_{j}}\left(  u_{j},u_{j}-u\right)  -B_{h_{j}}\left(
u,u_{j}-u\right)  \right)  ^{p/2}.
\end{align*}
Thus, to finish the proof, it is enough to apply the analysis already employed
for the case $p\geq2.$
\end{proof}

\subsection{$G$-convergence}

\begin{theorem}
\label{Th2}From the given sequence of coefficients $\left(  h_{j}\right)
_{j}\subset\mathcal{H},$ we can extract a subsequence, which will not be
relabelled, such that $h_{j}\rightharpoonup h$ weak-$\ast$ in $L^{\infty
}\left(  \mathbb{R}^{N}\times\mathbb{R}^{N}\right)  $ and $\left(
\mathcal{L}_{h_{j}}\right)  _{j}$ $G$-converges to $\mathcal{L}_{h}$ if
$j\rightarrow\infty.$
\end{theorem}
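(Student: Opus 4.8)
The plan is to show that Theorem \ref{Th1} already contains everything needed, and that the only work remaining is to upgrade the convergence extracted along a subsequence to a convergence of the whole sequence $(u_j)_j$, which is exactly what the definition of $G$-convergence requires. First I would recall that by Theorem \ref{Th1} we have extracted a subsequence along which $h_j\rightharpoonup h$ weak-$\ast$ in $L^\infty(\mathbb{R}^N\times\mathbb{R}^N)$ and $u_j=\mathcal{L}_{h_j}^{-1}f\to u=\mathcal{L}_h^{-1}f$ strongly in $X_0$; since strong convergence in $X_0$ implies weak convergence in $X_0$, the $G$-convergence requirement ($u_j\rightharpoonup u$ weakly in $X_0$) holds along that subsequence. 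So the statement of Theorem \ref{Th2} as written — extract a subsequence such that $h_j\rightharpoonup h$ and $\mathcal{L}_{h_j}$ $G$-converges to $\mathcal{L}_h$ — is an immediate corollary of Theorem \ref{Th1}. The proof can be phrased in one line: apply Theorem \ref{Th1} and note that strong $X_0$-convergence of $u_j$ to $u$ a fortiori gives the weak $X_0$-convergence in the definition of $G$-convergence, for every $f\in X_0'$, because the subsequence extracted in Theorem \ref{Th1} depends only on $(h_j)_j$ and not on $f$.

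The one point deserving care is the phrase ``for any $f\in X_0'$'' in the definition of $G$-convergence: the subsequence must be independent of $f$. I would therefore emphasize that the extraction in Theorem \ref{Th0} and Theorem \ref{Th1} is performed at the level of the coefficients $(h_j)_j$ alone — first passing to a subsequence with $h_j\rightharpoonup h$ weak-$\ast$, which fixes the limit coefficient $h\in\mathcal{H}$ and hence the limit operator $\mathcal{L}_h$ once and for all. For this fixed subsequence, given \emph{any} $f\in X_0'$, Theorem \ref{Th1} (whose proof used only the uniform bounds coming from $h_{\min}\le h_j\le h_{\max}$, the energy convergence \eqref{G-con-energies2}, and the elementary inequality \eqref{ele_ine}, none of which privileges a particular $f$) yields $\mathcal{L}_{h_j}^{-1}f\to\mathcal{L}_h^{-1}f$ strongly, hence weakly, in $X_0$. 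Thus the $G$-convergence holds along this single $f$-independent subsequence.

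\begin{proof}
By Theorem \ref{Th1} we may extract from $\left(  h_{j}\right)  _{j}$ a subsequence, not relabelled, such that $h_{j}\rightharpoonup h$ weak-$\ast$ in $L^{\infty}\left(  \mathbb{R}^{N}\times\mathbb{R}^{N}\right)  $ with $h\in\mathcal{H}$, and such that for every $f\in X_{0}^{\prime}$ the corresponding solutions satisfy $u_{j}=\mathcal{L}_{h_{j}}^{-1}f\rightarrow u=\mathcal{L}_{h}^{-1}f$ strongly in $X_{0}$. We stress that this subsequence is determined by the sequence of coefficients alone and does not depend on $f$: indeed the weak-$\ast$ extraction fixes the limit $h$, hence the operator $\mathcal{L}_{h}$, and the strong convergence in Theorem \ref{Th1} was obtained for an arbitrary $f\in X_{0}^{\prime}$ from the uniform monotonicity bound \eqref{ele_ine}, the uniform ellipticity $h_{\min}\leq h_{j}\leq h_{\max}$, and the energy convergence \eqref{G-con-energies2}, none of which singles out a particular right-hand side. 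Since strong convergence in $X_{0}$ implies weak convergence in $X_{0}$, we conclude that for every $f\in X_{0}^{\prime}$,
\[
\mathcal{L}_{h_{j}}^{-1}f\rightharpoonup\mathcal{L}_{h}^{-1}f\text{ weakly in }X_{0}\text{ as }j\rightarrow\infty,
\]
which is precisely the statement that $\left(  \mathcal{L}_{h_{j}}\right)  _{j}$ $G$-converges to $\mathcal{L}_{h}$.
\end{proof}
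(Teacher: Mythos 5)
Your proposal is correct and follows essentially the same route as the paper, which itself opens by observing that the proof is ``unnecessary'' because the strong $X_0$-convergence of Theorem \ref{Th1} already implies the weak convergence demanded by $G$-convergence (the paper then adds, only for clarity, an explicit verification of $\langle f,u_j\rangle\to\langle f,u\rangle$ via the H\"older-type bound $B_h(u_f,u_j-u)\leq (B_h(u_f,u_f))^{1/p'}(B_h(u_j-u,u_j-u))^{1/p}$). Your extra remark that the extracted subsequence depends only on $(h_j)_j$ and not on $f$ is a welcome precision the paper glosses over; note only that its cleanest justification is the uniqueness of the limit $u=\mathcal{L}_h^{-1}f$ (so the $f$-dependent sub-extractions inside Theorems \ref{Th0}--\ref{Th1} upgrade to the whole $h$-subsequence), rather than merely the fact that the estimates do not privilege a particular $f$.
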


The proof of $G$-convergence is unnecessary because the strong convergence in
$X_{0}$ implies weak convergence in that space. Despite this consideration and
for the sake of clarity, we will proceed with the proof. So, if we consider
any $f\in X_{0}^{\prime}$ and we fix $h\in\mathcal{H}$, then we can ensure the
existence of a unique solution $u_{f}\in X_{0}$ to the problem $\mathcal{L}%
_{h}v=f.$ Then, it is also clear that%
\[
\left\langle f,u_{j}\right\rangle _{H_{0}^{\prime}\times H_{0}}=\left\langle
\mathcal{L}_{h}u_{f},u_{j}\right\rangle _{H_{0}^{\prime}\times H_{0}}%
=B_{h}\left(  u_{f},u_{j}-u\right)  +B_{h}\left(  u_{f},u\right)  .
\]
By taking into account that%
\[
B_{h}\left(  u_{f},u_{j}-u\right)  \leq\left(  B_{h}\left(  u_{f}%
,u_{f}\right)  \right)  ^{1/p^{\prime}}\left(  B_{h}\left(  u_{j}%
-u,u_{j}-u\right)  \right)  ^{1/p}%
\]
and using Theorem \ref{Th1}, we get the desired $G$-convergence, namely
\[
\lim_{j}\left\langle f,u_{j}\right\rangle _{H_{0}^{\prime}\times H_{0}}%
=B_{h}\left(  u_{f},u\right)  =\left\langle f,u\right\rangle _{H_{0}^{\prime
}\times H_{0}}.\smallskip
\]

\quad Even though the notion of $G$-convergence is, by many reasons, not
satisfactory for the study of differential equations, it satisfies the basic
axiomatic rules (\cite{Pankov, Allaire}). For instance, the $G$-limit of a
sequence of operator is unique. To prove it, we proceed as follows: assume
that for any $f\in X_{0}^{\prime},$ $\mathcal{L}_{h}$ and $\mathcal{L}_{g}$
are two different $G$-limits of $\left(  \mathcal{L}_{h_{j}}\right)  _{j}$. If
the underlying solutions of these operators are $u_{f}=\mathcal{L}_{h}^{-1}f$
and $u_{g}=\mathcal{L}_{g}^{-1}f,$ then $u_{j}=\mathcal{L}_{h_{j}}%
^{-1}f\rightharpoonup u_{f}=\mathcal{L}_{h}^{-1}f$ and $u_{j}\rightharpoonup
u_{g}=\mathcal{L}_{g}^{-1}f$ weakly in $X_{0}.$ Then $u_{f}=u_{g},$ whereby we
deduce $B_{h}(u_{f},w)=\left\langle f,w\right\rangle $ and $B_{g}\left(
u_{f},w\right)  =\left\langle f,w\right\rangle ,$ for any $w\in X_{0}.$ Then
$B_{h-g}\left(  u_{f},w\right)  =0$ for any $w\in X_{0}$ and, in particular,
$B_{h-g}\left(  u_{f},u_{f}\right)  =0.$ Thus, since this equality holds for
any arbitrary functional $f\in X_{0}^{\prime},$ then the same equality holds
for arbitrary $u_{f}\in X_{0}.$ If we perform variations with respect to $u$
in the expression $B_{h-g}\left(  u,u\right)  =0$ we get
\begin{equation}
\int_{\Omega}\int_{\mathbb{R}^{N}}\left(  h-g\right)  k\frac{\left\vert
u^{\prime}-u\right\vert ^{p-2}\left(  u^{\prime}-u\right)  }{\left\vert
x^{\prime}-x\right\vert ^{p}}v\left(  x\right)  dx^{\prime}dx=0 \label{var}%
\end{equation}
for any $u\in X_{0}$ and any $v\in L^{\infty}\left(  \Omega\right)  $ with
$\operatorname*{supp}v\subset\Omega.$ Now, if we follow the lines marked in
the proof of Proposition 17 from \cite{Elbau}, we conclude that the function
\[
V\left(  x^{\prime},x,w,z\right)  =\left(  h\left(  x^{\prime},x\right)
-g\left(  x^{\prime},x\right)  \right)  k\left(  \left\vert x^{\prime
},x\right\vert \right)  \frac{\left\vert z-w\right\vert ^{p-2}\left(
z-w\right)  }{\left\vert x^{\prime}-x\right\vert ^{p}}%
\]
is constant in $z,$ for any $\left(  x,x^{\prime}\right)  \in\Omega
\times\mathbb{R}^{N}$ and any $w\in\mathbb{R}^{N}.$ This clearly implies
$h-g=0$ in $\left(  x^{\prime},x\right)  \in\Omega\times\mathbb{R}^{N}.$ If we
use the symmetry property of the coefficients $h$ and $g,$ then the same is
true $\mathbb{R}^{N}\times\Omega.$ Then $h=g$ in $\left(  \Omega
\times\mathbb{R}^{N}\right)  \cup\left(  \mathbb{R}^{N}\times\Omega\right)  ,$
which suffices to ensure $\mathcal{L}_{h}$ and $\mathcal{L}_{g}$ coincide.

\subsection{$H$-convergence}

\begin{theorem}
\label{Th3}From a given sequence of coefficients $\left(  h_{j}\right)
_{j}\subset\mathcal{H},$ we can extract a subsequence, which will be not
relabelled, such that $h_{j}\rightharpoonup h$ weak-$\ast$ in $L^{\infty
}\left(  \mathbb{R}^{N}\times\mathbb{R}^{N}\right)  $ and $\left(
\mathcal{L}_{h_{j}}\right)  _{j}$ $H$-converges to $\mathcal{L}_{h}$ if
$j\rightarrow\infty.$
\end{theorem}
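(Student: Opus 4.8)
The plan is to reduce the statement to the strong $X_0$-compactness already proved in Theorem \ref{Th1}, by recognising the nonlocal flux as a fixed nonlinear superposition operator applied to the nonlocal gradient. First I would pass to the subsequence furnished by Theorem \ref{Th1}: along it $h_j\rightharpoonup h$ weak-$*$ in $L^\infty(\mathbb{R}^N\times\mathbb{R}^N)$ and $u_j=\mathcal{L}_{h_j}^{-1}f\to u=\mathcal{L}_h^{-1}f$ strongly in $X_0$. Since strong convergence entails weak convergence, item (1) in the definition of $H$-convergence holds immediately; all the work concerns item (2), the weak $L^{p'}$-convergence of the fluxes.

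Introduce the nonlocal gradients $w_j(x',x)=k^{1/p}(|x'-x|)\frac{u_j'-u_j}{|x'-x|}$ and $w(x',x)=k^{1/p}(|x'-x|)\frac{u'-u}{|x'-x|}$. Because $(p-1)/p=1/p'$, a direct computation gives $k^{1/p'}(|x'-x|)\frac{|u_j'-u_j|^{p-2}(u_j'-u_j)}{|x'-x|^{p-1}}=|w_j|^{p-2}w_j$, and hence
\[
\Psi_{h_j}=h_j\,|w_j|^{p-2}w_j,\qquad \Psi_h=h\,|w|^{p-2}w.
\]
The strong convergence $u_j\to u$ in $X_0$, i.e.\ $B(u_j-u,u_j-u)\to0$ (see also $(\ref{1})$ and use $h_{\min}>0$), says exactly that $w_j\to w$ strongly in $L^p(\mathbb{R}^N\times\mathbb{R}^N)$; and $\|\Psi_{h_j}\|_{L^{p'}}^{p'}\le h_{\max}^{p'}\|w_j\|_{L^p}^{p}$ is uniformly bounded because $(u_j)_j$ is bounded in $X_0$.

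Next I would invoke the classical continuity of the Nemytskii operator $\xi\mapsto|\xi|^{p-2}\xi$ from $L^p$ into $L^{p'}$: given any subsequence, extract a further one converging a.e.\ with an $L^p$-dominating function $g$; then $|w_j|^{p-2}w_j\to|w|^{p-2}w$ a.e.\ with $L^{p'}$-dominating function $g^{p-1}$ (note $(p-1)p'=p$), so dominated convergence gives convergence in $L^{p'}$, and, the limit being independent of the subsequence, $|w_j|^{p-2}w_j\to|w|^{p-2}w$ strongly in $L^{p'}(\mathbb{R}^N\times\mathbb{R}^N)$ for the whole sequence. It then remains to check that a strongly convergent sequence in $L^{p'}$ multiplied by a weak-$*$ convergent, uniformly bounded sequence in $L^\infty$ converges weakly in $L^{p'}$: for $\varphi\in L^p$ write
\[
\int h_j\,|w_j|^{p-2}w_j\,\varphi=\int h_j\bigl(|w_j|^{p-2}w_j-|w|^{p-2}w\bigr)\varphi+\int h_j\,|w|^{p-2}w\,\varphi,
\]
where the first term is $\le h_{\max}\bigl\||w_j|^{p-2}w_j-|w|^{p-2}w\bigr\|_{L^{p'}}\|\varphi\|_{L^p}\to0$ and the second tends to $\int h\,|w|^{p-2}w\,\varphi$ since $|w|^{p-2}w\,\varphi\in L^1$ and $h_j\rightharpoonup h$ weak-$*$. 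Thus $\Psi_{h_j}\rightharpoonup\Psi_h$ weakly in $L^{p'}(\mathbb{R}^N\times\mathbb{R}^N)$, which is item (2), and together with item (1) this is precisely the $H$-convergence of $(\mathcal{L}_{h_j})_j$ to $\mathcal{L}_h$.

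I do not expect a serious obstacle here: Theorem \ref{Th1} has already absorbed the difficult part, and the only genuinely conceptual move is to rewrite $\Psi_{h_j}$ as $h_j$ times a pointwise nonlinear function of the nonlocal gradient. The points requiring some care are the identification of strong $X_0$-convergence of $(u_j)_j$ with strong $L^p$-convergence of the nonlocal gradients (so that the nonlinearity is dispatched by a routine Nemytskii argument), the uniform $L^{p'}$-bound on $(\Psi_{h_j})_j$, and the elementary passage from strong-times-weak-$*$ to weak convergence of the product.
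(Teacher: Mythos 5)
Your proposal is correct, and at the global level it mirrors the paper's strategy: pass to the subsequence of Theorem \ref{Th1}, note that item (1) of $H$-convergence is immediate from strong $X_{0}$-convergence, split the flux into a discrepancy term (the difference of the nonlinearities of $u_{j}$ and $u$, multiplied by $h_{j}$) plus the fixed term $h_{j}k^{1/p^{\prime}}\frac{\left\vert u^{\prime}-u\right\vert ^{p-2}\left(  u^{\prime}-u\right)  }{\left\vert x^{\prime}-x\right\vert ^{p-1}}$, and handle the latter by weak-$\ast$ convergence of $h_{j}$ against a fixed $L^{1}$ function. Where you genuinely diverge is in the treatment of the discrepancy term: the paper first reduces, via the uniform $L^{p^{\prime}}$ bound on the fluxes and a density argument, to test functions $G\in C_{c}\left(  \mathbb{R}^{N}\times\mathbb{R}^{N}\right)$, and then estimates its $I_{1}$ through the elementary inequality (\ref{ele_ine}), H\"{o}lder and Jensen inequalities on the compact support of $G$, with separate computations for $p\geq2$ and $1<p<2$; you instead observe that strong convergence in $X_{0}$ (together with $h_{\min}>0$ and (\ref{1})) is exactly strong $L^{p}$ convergence of the nonlocal gradients $w_{j}\rightarrow w$, and invoke continuity of the Nemytskii map $\xi\mapsto\left\vert \xi\right\vert ^{p-2}\xi$ from $L^{p}$ to $L^{p^{\prime}}$ (a.e.\ convergence along subsequences plus domination, using $\left(  p-1\right)  p^{\prime}=p$), which gives strong $L^{p^{\prime}}$ convergence of $\left\vert w_{j}\right\vert ^{p-2}w_{j}$ uniformly in $1<p<\infty$. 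Your route buys a case-free argument, no density reduction on the test functions, and a slightly stronger intermediate conclusion (norm convergence of the nonlinear term rather than only vanishing of its pairing with compactly supported $G$); the paper's route stays entirely within the elementary inequality already exploited in Theorem \ref{Th1} and avoids appealing to Nemytskii-type continuity. The individual steps you flag all check out: the identity $\Psi_{h_{j}}=h_{j}\left\vert w_{j}\right\vert ^{p-2}w_{j}$, the uniform $L^{p^{\prime}}$ bound from the $X_{0}$ bound, and the strong-times-weak-$\ast$ product argument are sound.
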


\begin{proof}
We know $u_{j}\rightarrow u\in X_{0}$ strongly in $X_{0}$. It is immediate to
prove that the sequence of nonlocal fluxes $\left(  \Psi_{h_{j}}\right)  _{j}$
is bounded in $L^{p^{\prime}}.$ The question we raise here is the following:
since there is a subsequence of indexes j and a function $\Psi$, such that
$\Psi_{j}\rightarrow\Psi\in L^{p^{\prime}}\left(  \mathbb{R}^{N}%
\times\mathbb{R}^{N}\right)  $ weakly in $L^{p^{\prime}}\left(  \mathbb{R}%
^{N}\times\mathbb{R}^{N}\right)  $, it is then $\Psi=\Psi_{h}$? To answer to
this question we examine the limit%
\begin{equation}
I\doteq\lim_{j}\int\int_{j}\Psi_{j}\left(  x^{\prime},x\right)  G\left(
x^{\prime},x\right)  dx^{\prime}dx, \label{limit}%
\end{equation}
where $G$ is any function from $L^{p}\left(  \mathbb{R}^{N}\times
\mathbb{R}^{N}\right)  .$ If $G_{k}\rightarrow G$ strongly in $L^{p}\left(
\mathbb{R}^{N}\times\mathbb{R}^{N}\right)  $\ if $k\rightarrow\infty,$ then%
\begin{align*}
\lim_{j}%
{\displaystyle\int_{\mathbb{R}^{N}}}
{\displaystyle\int_{\mathbb{R}^{N}}}
\Psi_{h_{j}}\left(  x^{\prime},x\right)  G\left(  x^{\prime},x\right)
dx^{\prime}dx  &  =\lim_{j}%
{\displaystyle\int_{\mathbb{R}^{N}}}
{\displaystyle\int_{\mathbb{R}^{N}}}
\Psi_{h_{j}}\left(  x^{\prime},x\right)  G_{k}\left(  x^{\prime},x\right)
dx^{\prime}dx\\
&  +\lim_{j}%
{\displaystyle\int_{\mathbb{R}^{N}}}
{\displaystyle\int_{\mathbb{R}^{N}}}
\Psi_{h_{j}}\left(  x^{\prime},x\right)  \left(  G\left(  x^{\prime},x\right)
-G_{k}\left(  x^{\prime},x\right)  \right)  dx^{\prime}dx
\end{align*}
and thanks to H\"{o}lder's inequality and the uniform estimation of
$\Psi_{h_{j}}$ in $L^{p^{\prime}}$, we get%
\[%
{\displaystyle\int_{\mathbb{R}^{N}}}
{\displaystyle\int_{\mathbb{R}^{N}}}
\Psi_{h_{j}}\left(  x^{\prime},x\right)  \left(  G\left(  x^{\prime},x\right)
-G_{k}\left(  x^{\prime},x\right)  \right)  dx^{\prime}dx\leq C\left(
{\displaystyle\int_{\mathbb{R}^{N}}}
{\displaystyle\int_{\mathbb{R}^{N}}}
\left\vert G\left(  x^{\prime},x\right)  -G_{k}\left(  x^{\prime},x\right)
\right\vert ^{p}dx^{\prime}dx\right)  ^{1/p}\rightarrow0
\]
if $k\rightarrow\infty$ uniformly in $j.$ So, in the analysis of
(\ref{limit}), we only shall refer to the case of function $G\in C_{c}\left(
\mathbb{R}^{N}\times\mathbb{R}^{N}\right)  .$ We firstly consider the case
$p\geq2$. We perform the decomposition%
\[
I=I_{1}+I_{2}%
\]
where%
\begin{align*}
I_{1}  &  =%
{\displaystyle\int_{\mathbb{R}^{N}}}
{\displaystyle\int_{\mathbb{R}^{N}}}
k^{1/p^{\prime}}h_{j}\left(  \frac{\left\vert u_{j}^{\prime}-u_{j}\right\vert
^{p-2}\left(  u_{j}^{\prime}-u_{j}\right)  }{\left\vert x^{\prime
}-x\right\vert ^{p-1}}-\frac{\left\vert u^{\prime}-u\right\vert ^{p-2}\left(
u^{\prime}-u\right)  }{\left\vert x^{\prime}-x\right\vert ^{p-1}}\right)
G\left(  x^{\prime},x\right)  dx^{\prime}dx,\\
I_{2}  &  =%
{\displaystyle\int_{\mathbb{R}^{N}}}
{\displaystyle\int_{\mathbb{R}^{N}}}
h_{j}k^{1/p^{\prime}}\frac{\left\vert u^{\prime}-u\right\vert ^{p-2}\left(
u^{\prime}-u\right)  }{\left\vert x^{\prime}-x\right\vert ^{p-1}}G\left(
x^{\prime},x\right)  dx^{\prime}dx.
\end{align*}
The inequality (\ref{ele_ine}) yields ($\frac{1}{p^{\prime}}-\frac{1}{p}%
=\frac{p-2}{p})$
\begin{equation}
I_{1}\leq C%
{\displaystyle\int_{\mathbb{R}^{N}}}
{\displaystyle\int_{\mathbb{R}^{N}}}
h_{j}k^{\frac{1}{p}}\frac{\left\vert u_{j}^{\prime}-u_{j}-\left(  u^{\prime
}-u\right)  \right\vert }{\left\vert x^{\prime}-x\right\vert }\frac{\left(
k^{\frac{1}{p}}\left\vert u_{j}^{\prime}-u_{j}\right\vert +k^{\frac{1}{p}%
}\left\vert u^{\prime}-u\right\vert \right)  ^{p-2}}{\left\vert x^{\prime
}-x\right\vert ^{p-2}}\left\vert G\right\vert dx^{\prime}dx
\label{theinequality}%
\end{equation}
and by invoking H\"{o}lder inequality we get%
\begin{equation}%
\begin{array}
[c]{c}%
\displaystyle I_{1}\leq C\left(
{\displaystyle\iint_{\mathbb{R}^{N}\times\mathbb{R}^{N}}}
kh_{j}\frac{\left\vert u_{j}^{\prime}-u_{j}-\left(  u^{\prime}-u\right)
\right\vert ^{p}}{\left\vert x^{\prime}-x\right\vert ^{p}}dx^{\prime
}dx\right)  ^{1/p}\times\smallskip\\
\displaystyle\times\left(
{\displaystyle\iint_{\mathbb{R}^{N}\times\mathbb{R}^{N}}}
h_{j}\frac{\left\vert \left\vert u_{j}^{\prime}-u_{j}\right\vert k^{\frac
{1}{p}}+k^{\frac{1}{p}}\left\vert u^{\prime}-u\right\vert \right\vert
^{\left(  p-2\right)  p^{\prime}}\left\vert G\right\vert ^{p^{\prime}}%
}{\left\vert x^{\prime}-x\right\vert ^{\left(  p-2\right)  p^{\prime}}%
}dx^{\prime}dx\right)  ^{1/p^{\prime}}%
\end{array}
\label{theinequality2}%
\end{equation}
Since $0<\frac{\left(  p-2\right)  p}{p-1}\leq p$ and $\operatorname*{supp}%
G\subset K$, a compact set in $\mathbb{R}^{N}\times\mathbb{R}^{N}$, we can
apply Jensen's inequality to have the following uniform estimation:%
\begin{align*}
&
{\displaystyle\int_{\mathbb{R}^{N}}}
{\displaystyle\int_{\mathbb{R}^{N}}}
h_{j}\frac{\left\vert \left\vert u_{j}^{\prime}-u_{j}\right\vert k^{\frac
{1}{p}}+k^{\frac{1}{p}}\left\vert u^{\prime}-u\right\vert \right\vert
^{\frac{\left(  p-2\right)  p}{p-1}}}{\left\vert x^{\prime}-x\right\vert
^{\frac{\left(  p-2\right)  p}{p-1}}}\left\vert G\right\vert ^{p^{\prime}%
}dx^{\prime}dx\\
&  \leq C%
{\displaystyle\iint_{K}}
h_{j}k\frac{\left\vert \left\vert u_{j}^{\prime}-u_{j}\right\vert +\left\vert
u^{\prime}-u\right\vert \right\vert ^{p}}{\left\vert x^{\prime}-x\right\vert
^{p}}dx^{\prime}dx\\
&  \leq C%
{\displaystyle\int_{\mathbb{R}^{N}}}
{\displaystyle\int_{\mathbb{R}^{N}}}
h_{j}k\frac{\left(  \left\vert u_{j}^{\prime}-u_{j}\right\vert ^{p}+\left\vert
u^{\prime}-u\right\vert ^{p}\right)  }{\left\vert x^{\prime}-x\right\vert
^{p}}dx^{\prime}dx\leq C
\end{align*}
After (\ref{theinequality}) and (\ref{theinequality2}), is suffices to use
(\ref{1}) to deduce $\lim I_{1}=0.$\newline When $1<p<2$ the situation does
not essentially change because after using (\ref{ele_ine}), the estimation%
\[
\frac{\left\vert u_{j}^{\prime}-u_{j}-\left(  u^{\prime}-u\right)  \right\vert
}{\left\vert x^{\prime}-x\right\vert }\frac{\left(  \left\vert u_{j}^{\prime
}-u_{j}\right\vert +\left\vert u^{\prime}-u\right\vert \right)  ^{p-2}%
}{\left\vert x^{\prime}-x\right\vert ^{p-2}}\leq\frac{\left\vert u_{j}%
^{\prime}-u_{j}-\left(  u^{\prime}-u\right)  \right\vert ^{p-1}}{\left\vert
x^{\prime}-x\right\vert ^{p-1}}%
\]
and H\"{o}lder's inequality guarantee%
\[
I_{1}\leq C%
{\displaystyle\int_{\mathbb{R}^{N}}}
{\displaystyle\int_{\mathbb{R}^{N}}}
h_{j}k^{\frac{1}{p^{\prime}}}\frac{\left\vert u_{j}^{\prime}-u_{j}-\left(
u^{\prime}-u\right)  \right\vert ^{p-1}}{\left\vert x^{\prime}-x\right\vert
^{p-1}}dx^{\prime}dx\leq C%
{\displaystyle\int_{\mathbb{R}^{N}}}
{\displaystyle\int_{\mathbb{R}^{N}}}
h_{j}k\frac{\left\vert u_{j}^{\prime}-u_{j}-\left(  u^{\prime}-u\right)
\right\vert ^{p}}{\left\vert x^{\prime}-x\right\vert ^{p}}dx^{\prime}dx,
\]
and again, thanks to (\ref{1}), we deduce $\lim I_{1}=0.$\newline Now we look
at the limit of $I_{2}.$ We know $h_{j}\rightharpoonup h$ weak-$\ast$ in
$L^{\infty}\left(  \mathbb{R}^{N}\times\mathbb{R}^{N}\right)  $, $k_{\delta
}^{1/p^{\prime}}\frac{\left\vert u^{\prime}-u\right\vert ^{p-2}\left(
u^{\prime}-u\right)  }{\left\vert x^{\prime}-x\right\vert ^{p-1}}\in
L^{p^{\prime}}$ and $G\in L^{\infty},$ then it is evident that $k^{1/p^{\prime
}}\frac{\left\vert u^{\prime}-u\right\vert ^{p-2}\left(  u^{\prime}-u\right)
}{\left\vert x^{\prime}-x\right\vert ^{p-1}}G\left(  x^{\prime},x\right)  \in
L^{1}\ $and thereby,%
\[
\lim I_{2}=%
{\displaystyle\int_{\mathbb{R}^{N}}}
{\displaystyle\int_{\mathbb{R}^{N}}}
hk^{1/p^{\prime}}\frac{\left\vert u^{\prime}-u\right\vert ^{p-2}\left(
u^{\prime}-u\right)  }{\left\vert x^{\prime}-x\right\vert ^{p-1}}G\left(
x^{\prime},x\right)  dx^{\prime}dx
\]
We have proved $\Psi_{h_{j}}\rightharpoonup\Psi_{h}$ weakly in $L^{p^{\prime}%
}\left(  \mathbb{R}^{N}\times\mathbb{R}^{N}\right)  $ if $j\rightarrow\infty.$
\end{proof}

We have proved strong convergence in $X_{0}$ implies $H$ convergence. The
reverse implication is also true. Indeed, if we assume $u_{j}=\mathcal{L}%
_{h_{j}}f\rightharpoonup u=\mathcal{L}_{h}f\ $weakly in $X_{0}$ and
$\Psi_{h_{j}}\rightharpoonup\Psi_{h}$ weakly in $L^{p^{\prime}}$, then Theorem
\ref{Th0} factually establishes that $u_{j}\rightarrow u$ strongly in $L^{p}$,
at least for a subsequence. In addition, the weak convergence of $\left(
\Psi_{h_{j}}\right)  _{j}$ in $L^{p^{\prime}}$ ensures this sequence is
uniformly bounded $L^{p^{\prime}}$. Then, by using (\ref{partes}) we realize
that the norm of $\left\Vert u_{j}-u\right\Vert _{X_{0}}\ $ tends to zero:%
\begin{align*}
\lim_{j}B_{h}\left(  u_{j}-u,u_{j}-u\right)   &  =\lim_{j}\int d\Psi
_{j}\left(  x\right)  \left(  u_{j}-u\right)  \left(  x\right)  dx\\
&  \leq\lim_{j}\left\Vert d\Psi_{j}\left(  x\right)  \right\Vert
_{L^{p^{\prime}}}\left\Vert u_{j}-u\right\Vert _{L^{p}}=0
\end{align*}
This amounts to state the strong convergence for the whole sequence $\left(
u_{j}\right)  _{j},$ towards $u$ strongly in $X_{0}.$

\section{Conclusions\label{S5}}

Our starting point is the nonlocal energy criterion, Theorem \ref{Th0}. This
result establishes $u_{j}=\mathcal{L}_{h_{j}}^{-1}f\rightarrow u=\mathcal{L}%
_{h}^{-1}f$ strongly in $L^{p}$ (at least for a subsequence). After that, we
have improved this convergence by means of Theorem \ref{Th1}. We have proved
$u_{j}=\mathcal{L}_{h_{j}}^{-1}f\rightarrow u=\mathcal{L}_{h}^{-1}f$ strongly
in $X_{0},\ $at least for a subsequence. From this strong convergence in
$X_{0}$ we have inferred, as a particular case, the $G$-convergence of
$\left(  \mathcal{L}_{h_{j}}\right)  _{j}$ towards $\mathcal{L}_{h}$ (Theorem
\ref{Th2}). And finally, we have proved that the $H$-convergence is equivalent
to the strong convergence in $X_{0}$ (see Theorem \ref{Th3} and the final comments).

We conclude that, as it occurs in the local setting, the appropriate
conditions of monotonicity for the nonlocal operator and the use of the
corresponding nonlocal energy criterion seem to be the key points to establish
the type of convergences we have studied here.

\section*{Acknowledgement}

This work was supported by the Spanish Project MTM2017-87912-P, Ministerio de
Econom\'{\i}a, Industria y Competitividad (Spain), and by the Regional Project
SBPLY/17/180501/000452, JJ. CC. de Castilla-La Mancha. There are no conflicts
of interest to this work.

\end{document}